\newtheorem{example}{Example}[section]
\newtheorem{remark}{Remark}[section]
\tikzstyle{terminal}=[rectangle, rounded corners, minimum width=1cm, minimum height=1cm,text centered, draw=black]
\tikzstyle{terminal_2}=[rectangle, rounded corners, minimum width=0.5cm, minimum height=0.5cm,text centered, draw=black]
\tikzstyle{terminal_3}=[rectangle, rounded corners, minimum width=0.5cm, minimum height=0.5cm,text centered, draw=red]
\tikzstyle{arrow} = [thick,->,>=stealth]
\tikzstyle{addblock} = [draw,circle]
\tikzstyle{cost_block}=[rectangle, minimum width=1cm, minimum height=1cm,text centered, draw=red]
\title{Exponential Filter Stability via Dobrushin's Coefficient\thanks{Supported by the Natural Sciences and Engineering Research Council of Canada.}}
\author{Curtis McDonald\thanks{Dept. of Stat and Data Sciecne, Yale University, United States of America. \newline Email: curtis.mcdonald@yale.edu} 
  \and 
  Serdar Y\"uksel\thanks{Dept. of Math and Stats, Queen's University, Canada. \newline Email: yuksel@queensu.ca}}
\begin{document}
\maketitle
\begin{abstract}
Filter stability is a classical problem in the study of partially observed Markov processes (POMP), also known as hidden Markov models (HMM). For a POMP, an incorrectly initialized non-linear filter is said to be (asymptotically) stable if the filter eventually corrects itself as more measurements are collected. Filter stability results in the literature that provide rates of convergence typically rely on very restrictive mixing conditions on the transition kernel and measurement kernel pair, and do not consider their effects independently. In this paper, we introduce an alternative approach using the Dobrushin coefficients associated with both the transition kernel as well as the measurement channel. Such a joint study, which seems to have been unexplored, leads to a concise analysis that can be applied to more general system models under relaxed conditions: in particular, we show that if $(1 - \delta(T))(2-\delta(Q)) < 1$, where $\delta(T)$ and $\delta(Q)$ are the Dobrushin coefficients for the transition and the measurement kernels, then the filter is exponentially stable. Our findings are also applicable for controlled models.
\end{abstract}


\section{Introduction}

In the study of partially observed Markov processes (POMP), also known as hidden Markov models (HMM), we have a hidden state process that is developing over time and an observer who sees noisy measurements of the state. The observer computes conditional estimates of the state given their measurements to date sequentially through a non-linear filtering equation. The filter is computed in a recursive fashion using a Bayesian update, however this recursion is dependent on the observer's prior (with respect to the unobserved initial state) before he/she has made any measurements. If the observer has the wrong prior, the filter they compute will not match the true filter and we say the filter has been incorrectly initialized. Filter stability is concerned with the merging of the true filter and the incorrectly initialized filter as the observer collects more measurements. That is, even if the observer has the wrong prior for the system, with enough measurements this mistake will be corrected asymptotically. 

Asymptotic stability, where the filters merge as time goes on but at no specified rate, may be problematic since one cannot guarantee sufficient merging for a fixed finite time. For many applications, it is desirable to attach a rate of merging for filter stability, so that in finite time one can guarantee how ``close'' the false filter is to the true filter. As we will note in the literature review, there are such stability results in the literature however they rely on rather restrictive mixing conditions on the transition kernel. 

In this paper, we propose a new sufficient condition for exponential stability using Dobrushin coefficients associated with both the transition kernel as well as the measurement channel. Such a joint study seems to have been unexplored and leads to concise explicit conditions on filter stability which can be applied to general system models under more relaxed conditions.

\subsection{Notation and Preliminaries}\label{sec:note}
In the following, we will discuss the control-free model setup. The controlled case will be considered in Section \ref{controlledCase}.

Let $\mathcal{X},\mathcal{Y}$ be Polish (that is, complete, separable, metric) spaces equipped with their Borel sigma fields $\mathcal{B}(\mathcal{X})$ and $\mathcal{B}(\mathcal{Y})$. $\mathcal{X}$ will be called the state space, and $\mathcal{Y}$ the measurement space. 

Given a measurable space $(\mathcal{X},\mathcal{B}(\mathcal{X}))$ we denote the space of probability measures on this space as $\mathcal{P}(\mathcal{X})$. We will denote random variables by capital letters and their realizations with lower case letters. Further, we will express contiguous sets of random variables such as $Y_{0},Y_{1},\cdots, Y_{n}$ with a subscript $Y_{[0,n]}$ indicating the starting and ending index of the collection. Infinite sequences $Y_{0},Y_{1},\cdots$ will be expressed as $Y_{[0,\infty)}$.
 We then define two probability kernels, the transition kernel $T$ and the measurement kernel $Q$:
\begin{align*}
&T:\mathcal{X} \to \mathcal{P}(\mathcal{X})&Q:\mathcal{X} \to \mathcal{P}(\mathcal{Y})\\
&~~x \mapsto T(dx'|x) &~~x \mapsto Q(dy|x)
\end{align*}
where for a set $A \in \mathcal{B}(\mathcal{Y})$ we write $Q(x,A)=\int_{A}Q(dy|x)$.  For these kernel operators, we can overload the notation to define them as mappings from a space of probability measures to another space of probability measures as follows
\begin{align*}
&T:\mathcal{P}(\mathcal{X}) \to \mathcal{P}(\mathcal{X})
&Q:\mathcal{P}(\mathcal{X}) \to \mathcal{P}(\mathcal{Y})~~~~~~~~~~~~~~~~\\
&~~~~\pi(dx) \mapsto \int_{\mathcal{X}}T(dx'|x)\pi(dx)
&\pi(dx) \mapsto \int_{\mathcal{Y}}Q(dy|x)\pi(dx)
\end{align*}
In practice, the form of the kernel operator is clear via context if the input is a probability measure or an element of the state space. Note that $T$ and $Q$ are time invariant kernels in a POMP as we study.

A POMP is initialized with a state $x_{0} \in \mathcal{X}$ drawn from a prior measure $\mu$ on $(\mathcal{X},\mathcal{B}(\mathcal{X}))$. However, the state is not available at the observer, instead the observer sees the sequence $Y_{n}\sim Q(dy|X_{n})$. That is, each $Y_{n}$ is a noisy measurement of the hidden random variable $X_{n}$ via the measurement channel $Q$. We then have for any set $A \in \mathcal{B}(\mathcal{X} \times \mathcal{Y})$,
\begin{align}
P\bigg((X_{0},Y_{0}) \in A\bigg)=\int_{A}Q(dy|x)\mu(dx)
\end{align}
and the POMP updates via the transition kernel  $T:\mathcal{X} \to \mathcal{P}(\mathcal{X})$
\begin{align}
P((X_{n},Y_{n}) \in A|(X,Y)_{[0,n-1]}=(x,y)_{[0,n-1]}) 
=\int_{A}Q(dy|x_{n})T(dx_{n}|x_{n-1})
\end{align}
It follows that $\{(X_{n},Y_{n})\}_{n=0}^{\infty}$ itself is a Markov chain, and we will denote $P^{\mu}$ as the probability measure on $\Omega=\mathcal{X}^{\mathbb{Z}_{+}}\times \mathcal{Y}^{\mathbb{Z}_{+}}$, endowed with the product topology where $X_{0} \sim \mu$ (this of course means $\omega \in \Omega$ is a sequence of states and measurements $\omega=\{(x_{i},y_{i})\}_{i=0}^{\infty}$). A diagram of the flow of the POMP is seen in Figure \ref{fig:POMP}. The nodes represent random variables, and the arrows are labelled with the kernel that defines the conditional measure between two random variables. That is, the distribution of $Y_{1}$, conditioned on the past events, is fully determined by the realization of $X_{1}$ and the measurement channel $Q$, and the distribution of $X_{2}$, conditioned on the past events, is fully determined by the realization of $X_{1}$ and the transition kernel $T$.

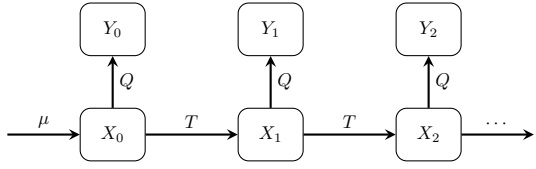
\begin{figure}
\begin{center}
\begin{tikzpicture}[scale=0.7, every node/.style={transform shape}]


\node (X0) [terminal,  text width = 1cm] {$X_{0}$};
\node (X1) [terminal, right of = X0, xshift = 2cm, text width = 1cm] {$X_{1}$};
\node (X2) [terminal, right of = X1, xshift = 2cm, text width = 1cm] {$X_{2}$};


\node (Y0) [terminal, above of = X0, yshift = 1cm, text width = 1cm] {$Y_{0}$};
\node (Y1) [terminal, above of = X1, yshift = 1cm, text width = 1cm] {$Y_{1}$};
\node (Y2) [terminal, above of = X2, yshift = 1cm, text width = 1cm] {$Y_{2}$};

\draw[arrow] (X0) -- node[anchor=south] {$T$} (X1);
\draw[arrow] (X1) -- node[anchor=south] {$T$} (X2);

\draw[arrow] (X0) -- node[anchor=west] {$Q$} (Y0);
\draw[arrow] (X1) -- node[anchor=west] {$Q$} (Y1);
\draw[arrow] (X2) -- node[anchor=west] {$Q$} (Y2);

\draw[arrow] (-2 cm, 0 cm) -- node[anchor=south] {$\mu$} (X0);

\draw[arrow] (X2) -- node[anchor=south] {$\cdots$} (8 cm, 0);
\end{tikzpicture}
\caption{Chain of Implications in POMP}
\label{fig:POMP}
\end{center}
\end{figure}

\begin{definition} We define the filter as the sequence of conditional probability measures
\begin{align}
\pi_{n}^{\mu}(\cdot)=P^{\mu}(X_{n} \in \cdot|Y_{[0,n]}) ~~n\in \{0,1,2,\cdots\}
\end{align}
\end{definition}
Calculating the filter realizations can be performed in a recursive manner. That is, given the previous filter realization $\pi_{n}^{\mu}\in \mathcal{P}(\mathcal{X})$ and a new observation $y_{n+1}\in \mathcal{Y}$ we can compute the next filter realization $\pi_{n+1}^{\mu}$ via the filter update function $\phi:\mathcal{P}(\mathcal{X}) \times \mathcal{Y} \to \mathcal{P}(\mathcal{X})$. 

Often one assumes that there exists a dominating measure $\lambda \in \mathcal{P}(\mathcal{Y})$ and for every $x \in \mathcal{X}$, $Q(dy|x) \ll \lambda$. Note that ``$\ll$'' means absolute continuity, so that for any set $A \in \mathcal{B}(\mathcal{Y})$ we have $\lambda(A)=0 \implies Q(x,A)=0~\forall x \in \mathcal{A}$. 
Then we say $Q$ is dominated and there exists a Radon Nikodym derivative for $Q(dy|x)$ with respect to $\lambda(dy)$ for each $x$, which is the conditional probability density function (pdf) or likelihood function $\frac{d Q}{d\lambda}(x,y)=g(x,y)$. Then we can define the \textit{Bayesian update operator}
\begin{align*}
\psi:& \mathcal{P}(\mathcal{X})\times \mathcal{Y} \to \mathcal{P}(\mathcal{X})\cup\{0\}\\
&(\pi(dx),y) \mapsto 
\begin{cases}
\frac{g(x,y)\pi(dx)}{\int_{\mathcal{X}}g(x,y)\pi(dx)}&\text{if}~\int_{\mathcal{X}}g(x,y)\pi(dx)>0\\
0&\text{else}
\end{cases}
\end{align*}

We will later call $N^{\mu}(y)=\int_{\mathcal{X}}g(x,y)\pi(dx)$ the normalizing constant. If $(X,Y) \sim P^{\mu}$ where $P^{\mu}((X,Y) \in (A \times B))=\int_{B}\int_{A}Q(dy|x)\mu(dx)$ then $N^{\mu}(Y)$ is non-zero with $P^{\mu}$ probability 1. That is, the set $B=\{y|N^{\mu}(y)=0\}$ has zero probability since
\begin{align*}
P^{\mu}(Y \in B)=\int_{B}P^{\mu}(dy)&=\int_{B}\int_{\mathcal{X}}g(x,y)\mu(dx)dy=\int_{B}N^{\mu}(y)dy=0
\end{align*}
additionally, for any other prior with $\mu \ll \nu$, we also have that $N^{\nu}(Y)$ is non-zero with $P^{\mu}$ probability 1. Thus inside of $P^{\mu}$ expectations we can consider $N^{\mu}(Y)$ and $N^{\nu}(Y)$ as being non-zero.

We can  then explicitly write the filter update operator as the composition of the Bayesian update operator with the transition kernel
\begin{align}
\pi_{n+1}^{\mu}(dx)&=\phi(\pi_{n}^{\mu},y_{n+1})(dx)=\psi(T(\pi_{n}^{\mu}),y_{n+1})(dx)
=\frac{g(x,y_{n+1})\int_{\mathcal{X}}T(dx|x')\pi_{n}^{\mu}(dx')}{\int_{\mathcal{X}}g(x,y_{n+1})\int_{\mathcal{X}}T(dx|x')\pi_{n}^{\mu}(dx')}\label{filter_update_eqn}
\end{align}
where (\ref{filter_update_eqn}) is often referred to as the filter update equation in the literature.

Since the filter update is a recursive process, it is sensitive to the initial distribution of $X_{0}$ which is the starting point of the recursion. Suppose that an observer computes the non-linear filter assuming that the initial prior is $\nu$, when in reality the prior distribution is $\mu$. The observer receives the measurements and computes the filter $\pi_{n}^{\nu}$ for each $n$, but the measurement process is generated according to the true measure $\mu$. The question we are interested in is that of filter stability, namely, if we have two different initial probability measures $\mu$ and $\nu$, when do we have that the filter processes $\pi_{n}^{\mu}$ and $\pi_{n}^{\nu}$ merge in some appropriate sense as $n \to \infty$?

\begin{definition}
For two probability measures $P,Q$ we define the total variation norm as
$\|P-Q\|_{TV}=\sup_{\|f\|_{\infty}\leq 1} \left|\int f dP-\int d dQ \right|$ where $f$ is assumed measurable and bounded with norm 1.
\end{definition}

\begin{definition}
A POMP is said to be exponentially stable in total variation in expectation if there exists a coefficient $0< \alpha< 1$ such that for any $\mu \ll \nu$ we have
\begin{align*}
E^{\mu}[\|\pi_{n+1}^{\mu}-\pi_{n+1}^{\nu}\|_{TV}]\leq \alpha E^{\mu}[\|\pi_{n}^{\mu}-\pi_{n}^{\nu}\|_{TV}]~~n \in \{0,1,\cdots\}
\end{align*}
\end{definition}

 Before we state our main result and supporting results, a brief literature review is presented next. Our main results are presented in Section \ref{sec_main}, with Theorem \ref{mixed_Dobrushin} providing a sufficient condition for exponential stability of the filter. In Section \ref{controlledCase}, we explain how these results can easily be applied to control models. A simple but useful application of the new approach is presented in Section \ref{CaseStudySection}, and concluding remarks in Section \ref{sec:conclusion}.

\section{Literature Review}

Filter stability is a very important subject, and consequently, one that has been studied extensively. We refer the reader to \cite{chigansky2009intrinsic, di2005ergodicity, le2004stability, chigansky2010complete,ocone1996asymptotic,douc2010forgetting, budhiraja1999exponential, crisan2008stability} for a comprehensive review and a collection of different approaches. As discussed in \cite{chigansky2009intrinsic}, filter stability arises via two separate mechanisms:
\begin{enumerate}
\item The transition kernel is in some sense {\it sufficiently} ergodic, forgetting the initial measure and therefore passing this insensitivity (to incorrect initializations) on to the filter process.
\item The measurement channel provides sufficient information about the underlying state, allowing the filter to  track the true state process.
\end{enumerate}
For a review of the methods utilizing the second mechanism above involving observability related aspects, we refer the refer to the very detailed literature reviews in \cite{chigansky2009intrinsic} and \cite{mcdonald2018stability}. 
 
Most of the literature has focused on the first of the two mechanisms noted above by showing that the transition kernel $T$ is \textit{sufficiently ergodic} \cite{chigansky2009intrinsic}, forgetting the initial measure as time goes on. By ergodicity, here we mean that the successive applications of the transition kernel $T$ brings any two different priors closer together through the filter update equation with increasing time. To achieve this end, results in the literature \cite{atar1997exponential,del2001stability, le2004stability} and various relaxations as in \cite{chigansky2004stability} or \cite[Theorems 2.1 and 2.2]{chigansky2009intrinsic} utilize some form of mixing, pseudo-mixing, or a similar condition on the transition kernel. A general mixing condition is along the lines of the following:

\begin{definition}\cite[Definition 3.2]{le2004stability}\label{mixingDefinition}
A kernel $K:\mathcal{S}_{1} \to \mathcal{P}(\mathcal{S}_{2})$ is called mixing if there exists a finite non-negative measure $\lambda \in \mathcal{P}(\mathcal{S}_{2})$ and $0< \epsilon\leq 1$ such that $\forall~A \in \mathcal{B}(\mathcal{S}_{2}), s \in \mathcal{S}_{1}$ $
\epsilon \lambda(A)\leq K(s,A)\leq \frac{1}{\epsilon}\lambda(A)$
\end{definition}
Such a mixing condition is a very strong assumption on a kernel. For example, a kernel on a finite probability space (which is a stochastic matrix) is mixing if and only if each column of the matrix is fully zero or fully non-zero. For example the following matrix is \textit{not} a mixing kernel.
\begin{footnotesize}
\begin{align*}
\begin{pmatrix}
0&0.25&0.75\\
0.25&0.25&0.5\\
0&0.1&0.9
\end{pmatrix}
\end{align*}
\end{footnotesize}

For a kernel $K:\mathcal{S}_{1} \to \mathcal{P}(\mathcal{S}_{2})$ with dominating measure $\lambda$ and likelihood function $k(s_{2}|s_{1})$, the kernel is mixing if and only if there exists two enveloping functions $f_{1},f_{2}\in L^{1}(\lambda)$ such that
\begin{align*}
0< a\leq \frac{f_{1}(s_{2})}{f_{2}(s_{2})}\leq b <\infty~\forall s_{2}\in \mathcal{S}_{2}\\
f_{1}(s_{2})\leq k(s_{2}|s_{1})\leq f_{2}(s_{2})~\forall s_{1}\in \mathcal{S}_{1},s_{2}\in \mathcal{S}_{2}
\end{align*}
For example, if $K:\mathbb{R} \to \mathcal{P}(\mathbb{R})$ where $K(dx'|x)\sim N(f(x),\sigma)$ where $\|f\|_{\infty}<\infty$ then $K$ is \textit{not} a mixing kernel.


The approach taken in \cite{le2004stability} utilizes the Hilbert metric to achieve stability.

\begin{definition}\cite[Definition 3.1]{le2004stability}
Two non-negative measures $\mu$ and $\nu$ on a measurable space $(S,\mathcal{F})$ are called comparable if $\exists~0< a \leq b$ such that $\forall A \in \mathcal{F}$,~
$
a\mu(A)\leq \nu(A)\leq b\mu(A)$.
\end{definition} 

\begin{definition}\cite[Definition 3.3]{le2004stability}
Let $\mu,\nu$ be two non-negative finite measures. We define the Hilbert metric on such measures as
\begin{align*}
h(\mu,\nu)&=\begin{cases}
\log\left(\frac{\sup_{A|\nu(A)>0}\frac{\mu(A)}{\nu(A)}}{\inf_{A|\nu(A)>0}\frac{\mu(A)}{\nu(A)}} \right)&\text{if}~\mu,\nu~\text{are comparable}\\
0&\text{if}~\mu=\nu=0\\
\infty&\text{else}
\end{cases}
\end{align*}
\end{definition}
We see that the Hilbert metric is only meaningful when $\mu$ and $\nu$ are comparable. Yet comparability implies mutual absolute continuity (i.e. $\mu \ll \nu$ and $\nu \ll \mu$) and therefore that the Radon Nikodym derivatives $\frac{d\mu}{d\nu}$ and $\frac{d\nu}{d\mu}$ exist, and furthermore, that these derivatives are bounded from above and below away from zero. In this case, we have that
$
h(\mu,\nu)=\log \left(\left\|\frac{d\mu}{d\nu} \right\|_{\infty}\left\|\frac{d\nu}{d\mu} \right\|_{\infty} \right)
$
when the measures are comparable.
The Hilbert metric is a projective distance, meaning if we scale either of the measures by a constant it will not change the Hilbert metric. This makes the metric very useful when studying the Bayesian update operator $\psi$ since the denominator in a Bayesian update is a non-linear scaling operator, while the numerator is a linear operator.
\begin{theorem}\cite[Corollary 4.2]{le2004stability}
Assume the measurement channel is dominated and has a likelihood function. Let $\bar{\phi}$ represent the un-normalized filter update,
$
\bar{\phi}(\mu,y)(dx)=g(x,y)T(\mu)(dx)
$,
which is a kernel mapping to the space of non-negative finite measures and not necessarily the space of probability measures. If $\bar{\phi}$ is a mixing Kernel with coefficient $\epsilon>0~\forall y \in \mathcal{Y}$ then
\begin{align}
\|\pi^{\mu}_{n+m}-\pi_{n+m}^{\nu}\|_{TV}\leq \left(\frac{2}{\log(3)\epsilon^{2}} \right)\left(\frac{1-\epsilon^{2}}{1+\epsilon^{2}} \right)^{m-1}\|\pi_{n}^{\mu}-\pi_{n}^{\nu}\|_{TV}\label{Hilbert_contract}
\end{align}
\end{theorem}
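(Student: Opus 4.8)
The plan is to carry the recursion out in the Hilbert metric $h$, where the dynamics contract geometrically, and to transfer the conclusion to total variation only at the two endpoints. Three facts will be used: (i) $h$ is a \emph{projective} metric, so it is blind to the Bayesian normalization; (ii) Birkhoff's contraction theorem converts the mixing hypothesis on $\bar{\phi}$ into a fixed geometric contraction rate for $h$; and (iii) the standard comparison inequalities between $h$ and $\|\cdot\|_{TV}$ on $\mathcal{P}(\mathcal{X})$.

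First I would note that $\pi_{n+1}^{\mu}=\psi(T\pi_n^{\mu},y_{n+1})$ equals $\bar{\phi}(\pi_n^{\mu},y_{n+1})$ up to the positive scalar $N^{\mu}(y_{n+1})$, so by projective invariance $h(\pi_{n+1}^{\mu},\pi_{n+1}^{\nu})=h\big(\bar{\phi}(\pi_n^{\mu},y_{n+1}),\bar{\phi}(\pi_n^{\nu},y_{n+1})\big)$. For fixed $y$ the map $\rho\mapsto\bar{\phi}(\rho,y)=g(\cdot,y)\,T(\rho)$ is linear on the cone of finite measures, and $\bar{\phi}(\rho,y)(A)=\int_{\mathcal{X}}\bar{\phi}(\delta_x,y)(A)\,\rho(dx)$ is a mixture of the one-point updates $\bar{\phi}(\delta_x,y)$ ($\delta_x$ the point mass at $x$). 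Hence if $\bar{\phi}$ is mixing with coefficient $\epsilon$ and reference measure $\lambda_y$, then $\epsilon\lambda_y\le\bar{\phi}(\rho,y)\le\epsilon^{-1}\lambda_y$ for every $\rho\in\mathcal{P}(\mathcal{X})$, so any two images are comparable with $h\le\log(\epsilon^{-4})$; that is, the image of $\bar{\phi}(\cdot,y)$ has projective diameter at most $4\log(1/\epsilon)$. Birkhoff's theorem — a positive linear operator contracts $h$ by the factor $\tanh(\Delta/4)$, $\Delta$ being the projective diameter of its image — then gives $h(\pi_{k+1}^{\mu},\pi_{k+1}^{\nu})\le\tfrac{1-\epsilon^2}{1+\epsilon^2}\,h(\pi_k^{\mu},\pi_k^{\nu})$ at every step, since $\tanh(\log(1/\epsilon))=\tfrac{1-\epsilon^2}{1+\epsilon^2}$.

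Next I would deal with the endpoints. Because $\pi_n^{\mu}$ and $\pi_n^{\nu}$ need not be comparable, $h(\pi_n^{\mu},\pi_n^{\nu})$ can be $+\infty$, so the geometric iteration must be seeded one step later, at $n+1$, where after one mixing step both measures sit inside $[\epsilon\lambda_y,\epsilon^{-1}\lambda_y]$. A direct estimate there gives $h(\pi_{n+1}^{\mu},\pi_{n+1}^{\nu})\le\epsilon^{-2}\|\pi_n^{\mu}-\pi_n^{\nu}\|_{TV}$: writing $d=\|\pi_n^{\mu}-\pi_n^{\nu}\|_{TV}$ and using that $\pi_n^{\mu}-\pi_n^{\nu}$ has zero mass, $|\bar{\phi}(\pi_n^{\mu},y)(A)-\bar{\phi}(\pi_n^{\nu},y)(A)|$ is at most the oscillation in $x$ of $\bar{\phi}(\delta_x,y)(A)$ — which is $\le(\epsilon^{-1}-\epsilon)\lambda_y(A)/2$ — times $d$, while each denominator is $\ge\epsilon\lambda_y(A)$; hence every density ratio and its reciprocal is $\le 1+\tfrac{(1-\epsilon^2)}{2\epsilon^2}d$, so that $h\le 2\log\!\big(1+\tfrac{(1-\epsilon^2)}{2\epsilon^2}d\big)\le\epsilon^{-2}d$. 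At the terminal time I would use the classical bound $\|p-q\|_{TV}\le\tfrac{2}{\log 3}\,h(p,q)$ for probability measures. Chaining — one step from $\|\pi_n^{\mu}-\pi_n^{\nu}\|_{TV}$ to $h(\pi_{n+1}^{\mu},\pi_{n+1}^{\nu})$, then $m-1$ Birkhoff contractions, then one comparison back to total variation — produces exactly the factor $\tfrac{2}{\log(3)\epsilon^2}\big(\tfrac{1-\epsilon^2}{1+\epsilon^2}\big)^{m-1}$.

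The hard part will be Birkhoff's contraction theorem and checking its hypotheses here: identifying $\bar{\phi}(\cdot,y)$ as a positive linear operator, deducing from the mixing condition that its image has finite projective diameter, and pushing that bound through the $\tanh$ formula to extract the rate $\tfrac{1-\epsilon^2}{1+\epsilon^2}$. A secondary point requiring care is the loss of comparability of $\pi_n^{\mu}$ and $\pi_n^{\nu}$, which is what forces the iteration to begin at $n+1$ and to absorb the extra $\epsilon^{-2}$ there — one cannot instead rely on a universal inequality bounding $h$ by $\|\cdot\|_{TV}$, since none holds.
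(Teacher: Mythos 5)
This theorem is quoted in the paper from \cite[Corollary 4.2]{le2004stability} without proof, and your reconstruction follows exactly the route of that cited source: projective invariance of the Hilbert metric under the Bayesian normalization, the mixing hypothesis forcing the image of $\bar{\phi}(\cdot,y)$ into $[\epsilon\lambda_{y},\epsilon^{-1}\lambda_{y}]$ so its projective diameter is at most $4\log(1/\epsilon)$ and Birkhoff's theorem yields the per-step rate $\tanh(\log(1/\epsilon))=\tfrac{1-\epsilon^{2}}{1+\epsilon^{2}}$, the one-step estimate $h(\pi_{n+1}^{\mu},\pi_{n+1}^{\nu})\leq \epsilon^{-2}\|\pi_{n}^{\mu}-\pi_{n}^{\nu}\|_{TV}$ to seed the iteration where comparability first holds, and the comparison $\|p-q\|_{TV}\leq \tfrac{2}{\log 3}h(p,q)$ at the terminal time. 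Your bookkeeping (the factor $\tfrac{1}{2}\|\cdot\|_{TV}$ from the Jordan decomposition, the $m-1$ contraction steps, and the final constant $\tfrac{2}{\log(3)\epsilon^{2}}$) is consistent with the paper's total-variation convention, so the proposal is correct and essentially the same argument as the cited reference.
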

Note that if $T$ is a mixing kernel with coefficient $\epsilon$, then $\bar{\phi}$ is as well but this can also be achieved without $T$ begin mixing, see \cite[Example 3.10]{le2004stability}. However, requiring $\bar{\phi}$ to be a mixing kernel is a very restrictive assumption. Often, such a condition is not applicable for applications with a non-compact state space. We can also note that the exponential coefficient $\frac{1-\epsilon^{2}}{1+\epsilon^{2}}$ may be close to $1$ for many reasonable values of $\epsilon \ll 1$ and hence may lead to a very slow rate of decay.


In short, most exponential stability results in the literature rely on the mixing condition which may be prohibitive for many applications, as noted in \cite[Section 4.3.6]{cappe2005inference} this is not a desirable approach to filter stability. We would like to find an approach that does not rely on this condition. 

Instead of such a strong mixing condition, we will introduce a new approach based {\bf on a joint contraction property of the Bayesian filter update and measurement update steps through the Dobrushin coefficient}: The only references, to our knowledge, where the Dobrushin coefficient is utilized are \cite{del2001stability} and \cite[Section 4.3]{cappe2005inference}, however a careful look at these contributions ultimate rely on mixing conditions \cite[Assumption 4.3.21,4.3.24]{cappe2005inference}, and these do not consider the effect of the measurement channel to refine the bounds. Our approach leads to a concise derivation through a direct approach of the Dobrushin coefficients and leads to more relaxed characterizations as we take into account the measurement updates as well.

\section{Main Result}\label{sec_main}

Our approach is to study when the filter update operator $\phi$ is a contraction in expectation, that is
\begin{align*}
E^{\mu}[\|\phi(\pi_{n}^{\mu},y_{n+1})-\phi(\pi_{n}^{\nu},y_{n+1})\|_{TV}]\leq \alpha \|\pi_{n}^{\mu}-\pi_{n}^{\nu}\|_{TV}
\end{align*}
for some $\alpha<1$. We will go about this by studying the Dobrushin coefficients of $T$ and $Q$.

\begin{definition}\cite[Equation 1.16]{dobrushin1956central}
For a kernel operator $K:S_{1} \to \mathcal{P}(S_{2})$ we define the Dobrushin coefficient as:
\begin{align}
\delta(K)&=\inf\sum_{i=1}^{n}\min(K(x,A_{i}),K(y,A_{i}))\label{Dob_def}
\end{align}
where the infimum is over all $x,y \in S_{1}$ and all partitions $\{A_{i}\}_{i=1}^{n}$ of $S_{2}$.
\end{definition}
 Note this definition holds for continuous or finite/countable spaces $S_{1}$ and $S_{2}$ and $0\leq \delta(K)\leq 1$ for any kernel operator.
The Dobrushin coefficient is conceptually a measure on how similar or different the different conditional measures $K(ds_{2}|s_{1}),K(ds_{2}|s_{1}')$ are for different $s_{1},s_{1}'$ (different conditionals). If the measures are similar, the coefficient is close to $1$ and if they are different, it is close to 0. Let us look at two examples
\begin{example}[Finite Space Setup]
Assume $S_{1}$ and $S_{2}$ are finite spaces, then $K$ is a $|S_{1}|$ by $|S_{2}|$ stochastic matrix. The Dobrushin coefficient is the minimum over any two rows where we sum the minimum elements among those rows. If we have the matrix
\begin{align*}
K=\begin{pmatrix}
0&\frac{1}{3}&\frac{2}{3}\\
\frac{1}{2}&\frac{1}{2}&0\\
\frac{1}{3}&\frac{1}{3}&\frac{1}{3}
\end{pmatrix}
\end{align*}
If we pick the first and second row, the sum of the minimum elements is $\frac{1}{3}$. If we pick the first and third rows, it is $\frac{2}{3}$ and the second and third rows it is $\frac{2}{3}$. Therefore the Dobrushin coefficient is $\frac{1}{3}$.
\end{example}

\begin{example}[Continuous Space Setup]
Assume for simplicity $S_{1}=S_{2}=\mathbb{R}$ and the dominating measure is the Lebesgue measure. Then we have a conditional pdf $k(s_{2}|s_{1})$. For any choice of $s_{1}$ and $s_{1}'$, the minimizing partition is two sets: one set where $k(s_{2}|s_{1})>k(s_{2}|s_{1}')$ and it's compliment. The result is then the area under the overlap of the two pdf's, and the Dobrushin coefficient is the minimum of this overlap area for any two pdf's. A demonstration for two pdf's is provided in Figure \ref{fig:dob_example}, the overlap area is shaded in gray.

\begin{figure}
\begin{center}
\includegraphics[scale=0.8]{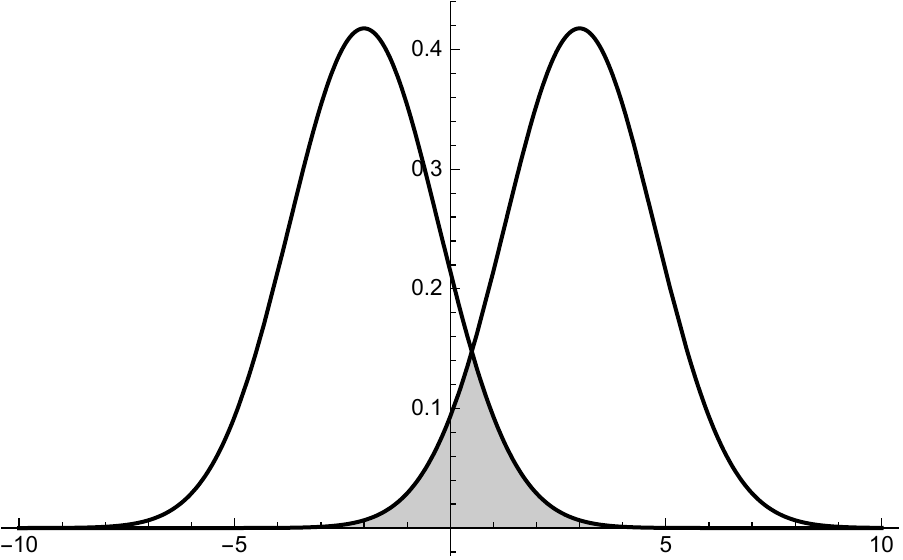}\label{fig:dob_example}
\caption{Example of Dobrushin coefficient calculation for dominated systems.}
\end{center}
\end{figure}

\end{example}
 
  The Dobrushin coefficient provides a contraction coefficient for kernel operators in total variation. For two probability measures $\pi,\pi' \in \mathcal{P}(S_{1})$  {\cite{dobrushin1956central}:
\begin{align*}
\|K(\pi)-K(\pi')\|_{TV} \leq (1-\delta(K))\|\pi-\pi'\|_{TV}
\end{align*}
As was discussed in Section \ref{sec:note}, the filter update operator $\phi$ is a composition of the transition kernel $T$ and the Bayesian update operator $\psi$. The transition operator $T$ is a contraction mapping with coefficient $(1-\delta(T))$, which potentially could be $1$. Assume that it is less than 1, then without the Bayes update the transition operator would bring measures together with each successive application. However, the Bayes operator is in general not a contraction, and can in fact increase the expected total variation distance between posteriors compared to the priors. 
\begin{example}\label{bayes_expand}
Consider as a simple example the priors and measurement kernel 
\begin{align*}
\mu=(0.05,0.65,0.3)
\quad \nu=(0.2, 0.65, 0.15)
\quad Q=\begin{pmatrix}
0.1&0.3&0.6\\
0.5&0.3&0.2\\
0.9&0.1&0
\end{pmatrix}
\end{align*}
the original total variation $\|\mu-\nu\|_{TV}$ distance is 0.3, but the expected distance of the posteriors is $0.3728$.
\end{example}

 We are therefore not guaranteed that the composition of the two operators $T$ and $\psi$ is a contraction. However, if we have an upper bound on
\begin{align*}
\frac{\int_{\mathcal{X}}\int_{\mathcal{Y}}\|\psi(\mu,y)-\psi(\nu,y)\|_{TV}Q(dy|x)\mu(dx)}{\|\mu-\nu\|_{TV}}
\end{align*}
then if $\delta(T)$ is sufficiently large, the possible expansion property of $\psi$ is dominated by the contraction property of $T$ and the composed operator $\phi$ is itself a contraction in expectation.

\begin{lemma}\label{Bayes_Dobrushin}
Consider a true prior $\mu$ and a false prior $\nu$ with $\mu \ll \nu$. Assume that the measurement channel $Q$ is dominated, then we have that
\begin{align*}
\int_{\mathcal{X}}\int_{\mathcal{Y}}\|\psi(\mu,y)-\psi(\nu,y)\|_{TV}]Q(dy|x)\mu(dx)\leq (2-\delta(Q))\|\mu-\nu\|_{TV}
\end{align*}
\end{lemma}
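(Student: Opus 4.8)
The plan is to bound the left-hand side by splitting the Bayesian update difference into two pieces: a piece that measures how much the normalizing constants disagree, and a piece that measures the weighted disagreement of the unnormalized densities. Concretely, for a fixed $y$ with $N^\mu(y), N^\nu(y) > 0$, write
\begin{align*}
\psi(\mu,y)(dx) - \psi(\nu,y)(dx)
= \frac{g(x,y)\mu(dx)}{N^\mu(y)} - \frac{g(x,y)\nu(dx)}{N^\nu(y)},
\end{align*}
and add and subtract $\frac{g(x,y)\nu(dx)}{N^\mu(y)}$. This yields
\begin{align*}
\psi(\mu,y) - \psi(\nu,y)
= \frac{1}{N^\mu(y)}\big(g(\cdot,y)\mu - g(\cdot,y)\nu\big)
+ g(\cdot,y)\nu\left(\frac{1}{N^\mu(y)} - \frac{1}{N^\nu(y)}\right).
\end{align*}
Taking total variation norms and using the triangle inequality, the first term contributes $\frac{1}{N^\mu(y)}\|g(\cdot,y)(\mu-\nu)\|_{TV}$ and the second contributes $\frac{|N^\nu(y) - N^\mu(y)|}{N^\mu(y)}$ (since $\|g(\cdot,y)\nu\|_{TV} = N^\nu(y)$ and the scalar factor is $|N^\mu(y)^{-1} - N^\nu(y)^{-1}| = |N^\nu(y)-N^\mu(y)|/(N^\mu(y)N^\nu(y))$). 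The key observation is that $|N^\mu(y) - N^\nu(y)| = \left|\int g(x,y)(\mu-\nu)(dx)\right| \le \|g(\cdot,y)(\mu - \nu)\|_{TV}$ by the variational characterization of total variation, so both pieces are controlled by the single quantity $\|g(\cdot,y)(\mu-\nu)\|_{TV}$, giving
\begin{align*}
\|\psi(\mu,y) - \psi(\nu,y)\|_{TV} \le \frac{2}{N^\mu(y)}\|g(\cdot,y)(\mu - \nu)\|_{TV}.
\end{align*}

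Next I would integrate against $Q(dy|x)\mu(dx)$. The crucial step is to change the order of integration and exploit the structure of $N^\mu(y) = \int g(x',y)\mu(dx')$ together with $Q(dy|x)\mu(dx)$ having $y$-marginal $N^\mu(y)\lambda(dy)$. That is, $\int_{\mathcal{X}} Q(dy|x)\mu(dx) = \left(\int_{\mathcal{X}} g(x,y)\mu(dx)\right)\lambda(dy) = N^\mu(y)\lambda(dy)$. Therefore
\begin{align*}
\int_{\mathcal{X}}\int_{\mathcal{Y}} \frac{2}{N^\mu(y)}\|g(\cdot,y)(\mu-\nu)\|_{TV} Q(dy|x)\mu(dx)
= 2\int_{\mathcal{Y}} \|g(\cdot,y)(\mu - \nu)\|_{TV}\,\lambda(dy),
\end{align*}
and the troublesome normalizing constant cancels exactly. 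So the problem reduces to showing
\begin{align*}
\int_{\mathcal{Y}} \|g(\cdot,y)(\mu-\nu)\|_{TV}\,\lambda(dy) \le \left(1 - \tfrac{\delta(Q)}{2}\right)\|\mu - \nu\|_{TV}.
\end{align*}

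The final and most delicate step is this last inequality, which is where the Dobrushin coefficient of $Q$ enters. Using the Jordan/Hahn decomposition $\mu - \nu = (\mu-\nu)^+ - (\mu-\nu)^-$, both nonnegative measures of equal mass $\tfrac12\|\mu-\nu\|_{TV}$, I would write $\|g(\cdot,y)(\mu-\nu)\|_{TV} \le \int g(x,y)(\mu-\nu)^+(dx) + \int g(x,y)(\mu-\nu)^-(dx)$, but this crude bound only gives the constant $2$; instead one must keep the cancellation. Writing $\|g(\cdot,y)(\mu-\nu)\|_{TV} = \sup_{\|f\|_\infty \le 1}\left|\int f(x)g(x,y)(\mu-\nu)(dx)\right|$ and integrating the kernel $\int_{\mathcal{Y}} g(x,y)\cdot\lambda(dy) = 1$ suggests interpreting $\int_{\mathcal{Y}}\|g(\cdot,y)(\mu-\nu)\|_{TV}\lambda(dy)$ as the total variation of $(\mu-\nu)$ pushed through the channel $Q$ \emph{with the disintegration retained} — a quantity squeezed between $\|Q(\mu)-Q(\nu)\|_{TV} \ge 0$ and $\|\mu-\nu\|_{TV}$. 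The gain of $\delta(Q)/2$ over the trivial bound should come from a coupling or minorization argument: on the common-mass part of $Q(x,\cdot)$ and $Q(x',\cdot)$ (which has size at least $\delta(Q)$ by the definition of the Dobrushin coefficient as a partition infimum), the contributions of $(\mu-\nu)^+$ and $(\mu-\nu)^-$ partially cancel, removing a fraction $\delta(Q)$ of the mass; splitting this evenly over the positive and negative parts yields the factor $1 - \delta(Q)/2$, i.e. $2 - \delta(Q)$ after restoring the factor of $2$. I expect this coupling-based accounting of the cancellation on the shared mass — making rigorous the heuristic that $\delta(Q)$ worth of the measurement distributions coincide across states — to be the main obstacle, and I would carry it out by testing against an arbitrary $\|f\|_\infty \le 1$ and bounding $\int_{\mathcal{Y}}\left|\int f(x)g(x,y)(\mu-\nu)(dx)\right|\lambda(dy)$ directly using the minorizing measure guaranteed by $\delta(Q)$.
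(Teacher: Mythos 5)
Your decomposition (adding and subtracting $g(x,y)\nu(dx)/N^{\mu}(y)$) and the observation that the $y$-marginal of $Q(dy|x)\mu(dx)$ is $N^{\mu}(y)\lambda(dy)$, so the normalizing constant cancels after integration, are exactly the paper's steps. The genuine gap is the move where you control \emph{both} pieces by the single quantity $\|g(\cdot,y)(\mu-\nu)\|_{TV}$: replacing $|N^{\mu}(y)-N^{\nu}(y)|=\left|\int_{\mathcal{X}}g(x,y)(\mu-\nu)(dx)\right|$ by $\int_{\mathcal{X}}g(x,y)|\mu-\nu|(dx)$ destroys precisely the cancellation from which the $\delta(Q)$ gain must come, and the reduced inequality you are left with is false. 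Indeed, since $g\geq 0$ one has $\|g(\cdot,y)(\mu-\nu)\|_{TV}=\int_{\mathcal{X}}g(x,y)\,|\mu-\nu|(dx)$, and Fubini together with $\int_{\mathcal{Y}}g(x,y)\lambda(dy)=1$ gives
\begin{align*}
\int_{\mathcal{Y}}\|g(\cdot,y)(\mu-\nu)\|_{TV}\,\lambda(dy)=\int_{\mathcal{X}}|\mu-\nu|(dx)=\|\mu-\nu\|_{TV},
\end{align*}
an exact identity with no slack; so no coupling or minorization argument can bring it down to $\left(1-\tfrac{\delta(Q)}{2}\right)\|\mu-\nu\|_{TV}$ when $\delta(Q)>0$ and $\mu\neq\nu$ (the quantity is not ``squeezed between'' $\|Q(\mu)-Q(\nu)\|_{TV}$ and $\|\mu-\nu\|_{TV}$; it sits at the top end). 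As written, your chain only recovers the trivial bound $2\|\mu-\nu\|_{TV}$.

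The fix is to treat the two pieces asymmetrically, which is what the paper does. Keep the normalizing-constant piece as $|N^{\mu}(y)-N^{\nu}(y)|/N^{\mu}(y)$, with the $x$-integral \emph{inside} the absolute value; after integrating against $N^{\mu}(y)\lambda(dy)$ it becomes
\begin{align*}
\int_{\mathcal{Y}}\left|\int_{\mathcal{X}}g(x,y)(\mu-\nu)(dx)\right|\lambda(dy)=\|Q(\mu)-Q(\nu)\|_{TV}\leq (1-\delta(Q))\,\|\mu-\nu\|_{TV},
\end{align*}
where the Dobrushin contraction is applied to the channel output measures (the paper realizes the supremum over $\|f\|_{\infty}\leq 1$ by the test function $1_{S^{+}}-1_{S^{-}}$ built from the sign of $\int g(x,y)(\mu-\nu)(dx)$). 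Only the remaining piece is bounded crudely, and it integrates to exactly $\|\mu-\nu\|_{TV}$; summing the two contributions yields $(2-\delta(Q))\|\mu-\nu\|_{TV}$.
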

\begin{proof}
We will take a closer look at the operator $\psi$.  
For a general probability measure $\pi$ define the normalizing constant $
N^{\pi}(y)=\int_{\mathcal{X}}g(x,y)\pi(dx)
$.
As discussed in the notation section, $N^{\mu}(Y)$ and $N^{\nu}(Y)$ are non-zero with $P^{\mu}$ probability 1, and thus we will simply consider them as non-zero for the remainder of this proof.
\begin{small}
\begin{align*}
&\|\psi(\mu,y)-\psi(\nu,y)\|_{TV}=\sup_{\|f\|_{\infty} \leq 1}\left|\int_{\mathcal{X}}\frac{f(x)g(x,y)}{N^{\mu}(y)}\mu(dx)- \int_{\mathcal{X}}\frac{f(x)g(x,y)}{N^{\nu}(y)}\nu(dx)\right|\\
=&\sup_{\|f\|_{\infty} \leq 1}\left|\int_{\mathcal{X}}\frac{f(x)g(x,y)}{N^{\mu}(y)}\mu(dx)\pm \int_{\mathcal{X}}\frac{f(x)g(x,y)}{N^{\mu}(y)}\nu(dx)-\int_{\mathcal{X}}\frac{f(x)g(x,y)}{N^{\nu}(y)}\nu(dx) \right|\\
\leq& \sup_{\|f\|_{\infty}\leq 1}\frac{1}{N^{\mu}(y)}\left|\int_{X}f(x)g(x,y)(\mu-\nu)(dx)\right|+\sup_{\|f\|_{\infty}\leq 1}\left|\frac{1}{N^{\mu}(y)}-\frac{1}{N^{\nu}(y)} \right| \left|\int_{\mathcal{X}}f(x)g(x,y)\nu(dx) \right|\\
 \leq& \sup_{\|f\|_{\infty}\leq 1}\frac{1}{N^{\mu}(y)}\left|\int_{X}f(x)g(x,y)\left(\frac{d\mu}{d\nu}(x)-1 \right)\nu(dx)\right|+\left|\frac{N^{\nu}(y)-N^{\mu}(y)}{N^{\mu}(y)N^{\nu}(y)} \right| N^{\nu}(y)\\
\leq& \left(\frac{1}{N^{\mu}(y)} \right)\left(\left|N^{\mu}(y)-N^{\nu}(y) \right|+\int_{X}g(x,y)\left|1-\frac{d\mu}{d\nu}(x)\right|\nu(dx) \right)
\end{align*}
\end{small}
taking the expectation of this expression
\begin{small}
\begin{align*}
&\int_{X}\int_{Y}\|\psi(\mu,y)-\psi(\nu,y)\|_{TV}Q(dy|x)\mu(dx)=\int_{X}\int_{Y}\|\psi(\mu,y)-\psi(\nu,y)\|_{TV}g(x,y)\lambda(dy)\mu(dx)\\
=&\int_{Y}\|\psi(\mu,y)-\psi(\nu,y)\|_{TV}\left(\int_{X}g(x,y)\mu(dx)\right)\lambda(dy)\\
=&\int_{Y}\|\psi(\mu,y)-\psi(\nu,y)\|_{TV}N^{\mu}(y)\lambda(dy)\\
\leq& \int_{\mathcal{Y}}\left(\left|N^{\mu}(y)-N^{\nu}(y) \right|+\int_{X}g(x,y)\left|1-\frac{d\mu}{d\nu}(x)\right|\nu(dx)\right)\lambda(dy)\\
\leq& \int_{\mathcal{Y}}|N^{\mu}(y)-N^{\nu}(y)|\lambda(dy)+\int_{\mathcal{Y}}\int_{X}g(x,y)\left|1-\frac{d\mu}{d\nu}(x)\right|\nu(dx)\lambda(dy)\\
=&\int_{\mathcal{Y}}\left|\int_{\mathcal{X}}g(x,y)(\mu-\nu)(dx)\right|\lambda(dy)
+\int_{X}\left|1-\frac{d\mu}{d\nu}(x)\right|\left(\int_{\mathcal{Y}}g(x,y)\lambda(dy)\right)\nu(dx)
\end{align*}
\end{small}
Let us examine these two terms separately. For the second term, $g(x,y)$ is a probability density function for a fixed $x$, therefore it integrates to $1$ over $\lambda$ and we have
\begin{align*}
&\int_{X}\left|1-\frac{d\mu}{d\nu}(x)\right|\left(\int_{\mathcal{Y}}g(x,y)\lambda(dy)\right)\nu(dx)
=\int_{X}\left|1-\frac{d\mu}{d\nu}(x)\right|\nu(dx)=\|\mu-\nu\|_{TV}
\end{align*} 
for the first term, define the sets
\begin{align*}
S^{+}&=\{y|\int_{X}g(x,y)(\mu-\nu)(dx)>0\}\quad \quad
S^{-}=\{y|\int_{X}g(x,y)(\mu-\nu)(dx)\leq 0\}
\end{align*}
then we have
\begin{small}
\begin{align*}
&\int_{\mathcal{Y}}\left|\int_{\mathcal{X}}g(x,y)(\mu-\nu)(dx)\right|\lambda(dy)
=\int_{\mathcal{Y}}\left(1_{S^{+}}(y)-1_{S^{-}}(y) \right)\int_{\mathcal{X}}g(x,y)(\mu-\nu)(dx)\lambda(dy)
\end{align*}
\end{small}
We then have that $1_{S^{+}}(y)-1_{S^{-}}(y)$ is a measurable function of $y$ with infinity norm equal to 1, and in fact it achieves the supremum over all such functions. That is
\begin{align*}
&\int_{\mathcal{Y}}\left(1_{S^{+}}(y)-1_{S^{-}}(y) \right)\int_{X}g(x,y)(\mu-\nu)(dx)\lambda(dy)
=\sup_{\|f\|_{\infty}\leq 1}\left|\int_{\mathcal{Y}}f(y)\int_{\mathcal{X}}g(x,y)(\mu-\nu)(dx)\lambda(dy) \right|\\
=&\sup_{\|f\|_{\infty}\leq 1}\left|\int_{\mathcal{X}}\int_{\mathcal{Y}}f(y)g(x,y)\lambda(dy)(\mu-\nu)(dx) \right|
=\|Q(\mu)-Q(\nu)\|_{TV}\leq (1-\delta(Q))\|\mu-\nu\|_{TV}
\end{align*}
\end{proof}

Indeed, if we consider Example \ref{bayes_expand}, the Dobrushin coefficient of $Q$ is $0.2$, so our upper bound is 1.8 while the ratio provided is $\frac{0.3726}{0.3}=1.24$, less than our upper bound. More important though is pairing the Bayes update with a sufficiently contractive transition kernel.
\begin{theorem}\label{mixed_Dobrushin}
Assume that $\mu \ll \nu$ and that the measurement channel $Q$ is dominated. Then we have
\begin{align*}
E^{\mu}[\|\pi_{n+1}^{\mu}-\pi_{n+1}^{\nu}\|_{TV}]\leq (1-\delta(T))(2-\delta(Q))E^{\mu}[\|\pi_{n}^{\mu}-\pi_{n}^{\nu}\|_{TV}]
\end{align*}
\end{theorem}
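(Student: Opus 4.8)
The plan is to peel off one step of the recursion $\pi_{n+1}^{\mu}=\phi(\pi_{n}^{\mu},Y_{n+1})=\psi(T(\pi_{n}^{\mu}),Y_{n+1})$ and reduce the bound to a combination of Lemma~\ref{Bayes_Dobrushin} (the measurement/Bayes step) and the Dobrushin contraction of the transition kernel (the prediction step). First I would condition on $Y_{[0,n]}$ and use the tower property, $E^{\mu}[\,\cdot\,]=E^{\mu}\big[E^{\mu}[\,\cdot\mid Y_{[0,n]}]\big]$, so that it suffices to bound the inner conditional expectation for (almost) every realization of $Y_{[0,n]}$. For such a realization, $\pi_{n}^{\mu}$ and $\pi_{n}^{\nu}$ are fixed elements of $\mathcal{P}(\mathcal{X})$, and the key observation is that under $P^{\mu}$ the conditional law of $Y_{n+1}$ given $Y_{[0,n]}$ is the predictive measure $Q(T(\pi_{n}^{\mu}))=\int_{\mathcal{X}}Q(dy\mid x)\,T(\pi_{n}^{\mu})(dx)$, since given $Y_{[0,n]}$ one has $X_{n}\sim\pi_{n}^{\mu}$, hence $X_{n+1}\sim T(\pi_{n}^{\mu})$, hence $Y_{n+1}\sim Q(T(\pi_{n}^{\mu}))$.

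With this identification the inner conditional expectation equals
\[
\int_{\mathcal{X}}\int_{\mathcal{Y}}\big\|\psi(T(\pi_{n}^{\mu}),y)-\psi(T(\pi_{n}^{\nu}),y)\big\|_{TV}\,Q(dy\mid x)\,T(\pi_{n}^{\mu})(dx),
\]
which is precisely the left-hand side of Lemma~\ref{Bayes_Dobrushin} applied with true prior $T(\pi_{n}^{\mu})$ and false prior $T(\pi_{n}^{\nu})$. Applying the lemma bounds this by $(2-\delta(Q))\,\|T(\pi_{n}^{\mu})-T(\pi_{n}^{\nu})\|_{TV}$, and then the Dobrushin contraction property of $T$ quoted before Lemma~\ref{Bayes_Dobrushin} gives $\|T(\pi_{n}^{\mu})-T(\pi_{n}^{\nu})\|_{TV}\le(1-\delta(T))\,\|\pi_{n}^{\mu}-\pi_{n}^{\nu}\|_{TV}$. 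Chaining these two estimates and taking the outer expectation $E^{\mu}[\,\cdot\,]$ over $Y_{[0,n]}$ yields the claimed inequality.

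The two places needing care are: (i) checking that Lemma~\ref{Bayes_Dobrushin} is applicable, i.e. that $T(\pi_{n}^{\mu})\ll T(\pi_{n}^{\nu})$ and that the relevant normalizing constants are positive $P^{\mu}$-almost surely; and (ii) the identification of the conditional law of $Y_{n+1}$. For (i), the hypothesis $\mu\ll\nu$ propagates along the recursion — the Bayes step multiplies both measures by the same likelihood $g(\cdot,y)$ and the prediction step $T$ preserves absolute continuity — so $\pi_{n}^{\mu}\ll\pi_{n}^{\nu}$ and hence $T(\pi_{n}^{\mu})\ll T(\pi_{n}^{\nu})$, while positivity of $N^{T(\pi_{n}^{\mu})}(Y_{n+1})$ and $N^{T(\pi_{n}^{\nu})}(Y_{n+1})$ holds $P^{\mu}$-almost surely by the argument recorded in Section~\ref{sec:note}. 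Point (ii) is the main conceptual obstacle: one must ensure that the expectation over $Y_{n+1}$ implicit in $E^{\mu}$ is taken against the \emph{true} predictive kernel $Q(T(\pi_{n}^{\mu}))$, which is exactly the role played by ``$\mu$'' (rather than ``$\nu$'') in Lemma~\ref{Bayes_Dobrushin}; this asymmetry — measurements generated by the true dynamics while both filters are propagated forward — is what makes the measurement lemma and the transition contraction compose correctly into the stated one-step bound.
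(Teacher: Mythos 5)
Your proposal is correct and follows essentially the same route as the paper's proof: condition on $Y_{[0,n]}$, identify the conditional law of $Y_{n+1}$ as $\int Q(dy\mid x_{n+1})T(\pi_{n}^{\mu})(dx_{n+1})$ by introducing $X_{n+1}$, apply Lemma~\ref{Bayes_Dobrushin} with priors $T(\pi_{n}^{\mu})$ and $T(\pi_{n}^{\nu})$, and finish with the Dobrushin contraction of $T$. Your extra remarks on propagating $\mu\ll\nu$ through the recursion and on the almost-sure positivity of the normalizing constants are careful additions that the paper leaves implicit, but they do not change the argument.
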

\begin{proof}
\begin{align*}
&E^{\mu}[\|\pi_{n+1}^{\mu}-\pi_{n+1}^{\nu}\|_{TV}]
=E^{\mu}[\|\phi(\pi^{\mu}_{n},y_{n+1})-\phi(\pi_{n}^{\nu},y_{n+1})\|_{TV}]\\
&=E^{\mu}[\|\psi(T(\pi_{n}^{\mu}),y_{n+1})-\psi(T(\pi_{n}^{\nu}),y_{n+1})\|_{TV}]\\
&=\int_{\mathcal{Y}^{n+2}}\|\psi(T(\pi_{n}^{\mu}),y_{n+1})-\psi(T(\pi_{n}^{\nu}),y_{n+1})\|_{TV}P^{\mu}(dy_{[0,n+1]})\\
&=\int_{\mathcal{Y}^{n+1}}\int_{\mathcal{Y}}\|\psi(T(\pi_{n}^{\mu}),y_{n+1})-\psi(T(\pi_{n}^{\nu}),y_{n+1})\|_{TV}P^{\mu}(dy_{n+1}|y_{[0,n]})P^{\mu}(dy_{[0,n]})
\end{align*}
now we condition on $X_{n+1}$ as a hidden variable to break the conditioning into two parts.
\begin{small}
\begin{align*}
\int_{\mathcal{Y}^{n+1}}\int_{\mathcal{X}}\int_{\mathcal{Y}}\|\psi(T(\pi_{n}^{\mu}),y_{n+1})-\psi(T(\pi_{n}^{\nu}),y_{n+1})\|_{TV}P^{\mu}(dy_{n+1}|x_{n+1},y_{[0,n]})P^{\mu}(dx_{n+1}|y_{[0,n]})P^{\mu}(dy_{[0,n]})
\end{align*}
\end{small}
since $Y_{n+1}$ is fully determined by $X_{n+1}$ we have that $P^{\mu}(dy_{n+1}|x_{n+1},y_{[0,n]})=Q(dy_{n+1}|x_{n+1})$ and the measure $P^{\mu}(dx_{n+1}|y_{[0,n]})=T(\pi_{n}^{\mu})(dx_{n+1})$ is the filter put through the transition kernel. We then have
\begin{small}
\begin{align*}
&\int_{\mathcal{Y}^{n+1}}\left(\int_{\mathcal{X}}\int_{\mathcal{Y}}\|\psi(T(\pi_{n}^{\mu}),y_{n+1})-\psi(T(\pi_{n}^{\nu}),y_{n+1})\|_{TV}Q(dy_{n+1}|x_{n+1}) T(\pi_{n}^{\mu})(dx_{n+1})\right)P^{\mu}(dy_{[0,n]})\\
\end{align*}
\end{small}
Now consider the expression inside the brackets. $T(\pi_{n}^{\mu})$ is playing the role of a prior for $X_{n+1}$ before the observation $Y_{n+1}$ is made, and therefore this expression is exactly the form of an expected Bayes update as studied in Lemma \ref{Bayes_Dobrushin}. We can apply the Dobrushin bound on the Bayes update and we have:
\begin{small}
\begin{align*}
&\leq (2-\delta(Q))\int_{\mathcal{Y}^{n+1}}\|T(\pi_{n}^{\mu})-T(\pi_{n}^{\nu})\|_{TV}P^{\mu}(dy_{[0,n]})= (2-\delta(Q))(1-\delta(T))E^{\mu}[\|\pi_{n}^{\mu}-\pi_{n}^{\nu}\|_{TV}]
\end{align*}
\end{small}
\end{proof}

\begin{corollary}
Assume $\mu \ll \nu$ and that the measurement channel is $Q$ is dominated. If we have $
\alpha=(1-\delta(T))(2-\delta(Q)) < 1
$
then the filter is exponentially stable in total variation in expectation with coefficient $\alpha$ and
\begin{align*}
E^{\mu}[\|\pi_{n}^{\mu}-\pi_{n}^{\nu}\|_{TV}]\leq (2-\delta(Q))\left( \alpha^{n}\right)\|\mu-\nu\|_{TV}
\end{align*}
Furthermore, if $\delta(T)>\frac{1}{2}$ then $\alpha< 1$ and the POMP is exponentially stable regardless of the measurement kernel $Q$.
\end{corollary}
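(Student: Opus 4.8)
The plan is to obtain the Corollary as a direct consequence of Theorem~\ref{mixed_Dobrushin} together with Lemma~\ref{Bayes_Dobrushin}; no genuinely new estimate is required, and the work is essentially a one-line induction plus careful bookkeeping of the initialization step.

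First I would observe that the inequality in Theorem~\ref{mixed_Dobrushin} is precisely the one-step contraction appearing in the definition of exponential stability in total variation in expectation, with contraction constant $\alpha=(1-\delta(T))(2-\delta(Q))$. Since $\alpha\ge 0$ always and by hypothesis $\alpha<1$, this already establishes the qualitative claim that the filter is exponentially stable with coefficient $\alpha$ (the borderline case $\alpha=0$ forces $\delta(T)=1$, as $\delta(Q)\le 1<2$, and there any $\alpha'\in(0,1)$ works).

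Next, for the explicit rate, I would iterate the one-step bound of Theorem~\ref{mixed_Dobrushin}: applying it $n$ times gives $E^{\mu}[\|\pi_{n}^{\mu}-\pi_{n}^{\nu}\|_{TV}]\le \alpha^{n}\,E^{\mu}[\|\pi_{0}^{\mu}-\pi_{0}^{\nu}\|_{TV}]$. The key remaining step is to control the base term, and this is the one place where a little care is needed: the time-$0$ filter $\pi_{0}^{\mu}=P^{\mu}(X_{0}\in\cdot\mid Y_{0})$ is the \emph{pure} Bayesian update $\psi(\mu,Y_{0})$ of the prior, with no transition kernel applied yet, so $E^{\mu}[\|\pi_{0}^{\mu}-\pi_{0}^{\nu}\|_{TV}]=\int_{\mathcal{X}}\int_{\mathcal{Y}}\|\psi(\mu,y)-\psi(\nu,y)\|_{TV}\,Q(dy\mid x)\mu(dx)$, which Lemma~\ref{Bayes_Dobrushin} bounds by $(2-\delta(Q))\|\mu-\nu\|_{TV}$. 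Combining the two displays yields $E^{\mu}[\|\pi_{n}^{\mu}-\pi_{n}^{\nu}\|_{TV}]\le (2-\delta(Q))\,\alpha^{n}\,\|\mu-\nu\|_{TV}$, as claimed. I expect this base-case accounting to be the only subtle point — one must apply the Bayes-only bound (Lemma~\ref{Bayes_Dobrushin}), not the full filter-update bound (Theorem~\ref{mixed_Dobrushin}), to the initialization, since the latter would carry a spurious extra factor $(1-\delta(T))$.

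Finally, for the last sentence I would simply note that $\delta(Q)\in[0,1]$ implies $2-\delta(Q)\le 2$, so that $\delta(T)>\frac{1}{2}$ gives $\alpha=(1-\delta(T))(2-\delta(Q))<\frac{1}{2}\cdot 2=1$ irrespective of $Q$, and exponential stability then follows from the first part of the Corollary.
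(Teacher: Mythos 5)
Your proposal is correct and follows the paper's own argument essentially verbatim: iterate Theorem~\ref{mixed_Dobrushin} to reduce to the time-$0$ term, bound $E^{\mu}[\|\pi_{0}^{\mu}-\pi_{0}^{\nu}\|_{TV}]=E^{\mu}[\|\psi(\mu,Y_{0})-\psi(\nu,Y_{0})\|_{TV}]$ by $(2-\delta(Q))\|\mu-\nu\|_{TV}$ via Lemma~\ref{Bayes_Dobrushin}, and deduce the final claim from $2-\delta(Q)\le 2$. Your observation that the initialization requires the Bayes-only bound rather than the full filter-update bound is exactly the bookkeeping the paper performs.
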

\begin{proof}
By recursive application of Theorem \ref{mixed_Dobrushin} we have
\begin{align*}
E^{\mu}[\|\pi_{n}^{\mu}-\pi_{n}^{\nu}\|_{TV}]\leq \alpha^{n}E^{\mu}[\|\pi_{0}^{\mu}-\pi_{0}^{\nu}\|_{TV}]
\end{align*}
$\pi_{0}^{\mu}$ is then the Bayesian update of $\mu$ under the first observation $Y_{0}$, therefore we apply Lemma \ref{Bayes_Dobrushin} and we have
\begin{align*}
\alpha^{n}E^{\mu}[\|\pi_{0}^{\mu}-\pi_{0}^{\nu}\|_{TV}]&=\alpha^{n}E^{\mu}[\|\psi(\mu,y_{0})-\psi(\nu,y_{0})\|_{TV}]\leq (2-\delta(Q))(\alpha^{n})\|\mu-\nu\|_{TV}
\end{align*}
Finally, recall that for any kernel $K$ we have $0\leq \delta(K)\leq 1$ therefore if we have $\delta(T)>\frac{1}{2}$
\begin{align*}
\alpha=(1-\delta(T))(2-\delta(Q))< \frac{1}{2}(2-\delta(Q))\leq \frac{2}{2}=1
\end{align*}
\end{proof}

\begin{remark}
In \cite[Equation 1.5]{chigansky2004stability} the authors provide a condition depending only on the transition kernel that results in exponential filter stability. This condition is a weakening of the typical mixing results in Definition 2.1, but still inherits similar issues about zero probability entries. For example, in a finite state space the condition \cite[Equation 1.5]{chigansky2004stability} requires that at least one row of the transition matrix is non-zero, while typical mixing requires all rows to be non-zero. In a continuous state space, the previously discussed example $K(dx'|x)\sim N(f(x),\sigma)$ where $\|f\|_{\infty}<\infty$ would also violate \cite[Equation 1.5]{chigansky2004stability}. The condition does not imply a non-zero Dobrushin coefficient nor it is implied by it, and thus gives a complementary sufficient condition for exponential stability. Our condition, by relying on both the transition kernel and the measurement kernel, provides a way to determine filter stability when the transition kernel alone does not satisfy the mixing condition or variants of it.
\end{remark}
\begin{remark}
Our result result is sufficient, but certainly not necessary.
In what seems like a counter-intuitive result, this result prioritizes measurement channels $Q$ that are un-informative as opposed to those that are informative (see \cite{mcdonald2018stability} for more discussion on informative measurement channels). For example a completely independent observation $Y$ will have $\delta(Q)=1$ and direct observation will have $\delta(Q)=0$. However, the idea of our result is that the mapping $T$ is a contraction without any Bayes update. We then want a measurement kernel $Q$ that does not ``change'' this ergodic property, and a completely independent observation will result in $\psi(\mu)=\mu$, and hence will not conflict with the transition kernel $T$.

For example, consider a finite system and direct observation. That is $y$ is an invertible deterministic function of $x$, $Y=h(X)$. Then we have
\begin{align*}
\|\psi(\mu,y)-\psi(\nu,y)\|
=&\sup_{\|f\|_{\infty}\leq 1}\left|\sum_{x \in\mathcal{X}}f(x)g(x,y)\left(\frac{\mu(x)}{N^{\mu}(x)}-\frac{\nu(x)}{N^{\nu}(y)}\right)\right|\\
=&\sup_{\|f\|_{\infty}\leq 1}\left|\sum_{x\in \mathcal{X}}f(x)1_{h^{-1}(y)}(x)\left(\frac{\mu(x)}{\mu(h^{-1}(y))}-\frac{\nu(x)}{\nu(h^{-1}(y))}\right)\right|\\
=&\sup_{\|f\|_{\infty}\leq 1}\left|f(h^{-1}(y))\left(\frac{\mu(h^{-1}(y))}{\mu(h^{-1}(y))}-\frac{\nu(h^{-1}(y))}{\nu(h^{-1}(y))} \right) \right|
=0
\end{align*}

However, if we add and subtract $\frac{\mu(h^{-1}(y))}{\nu(h^{-1}(y))}$ in the first line and apply the triangle inequality we instead have:
\begin{small}
\begin{align*}
&\left(\frac{1}{\mu(h^{-1}(y))} \right)\sup_{\|f\|_{\infty}\leq 1}\left|f(h^{-1}(y))(\mu(h^{-1}(y))-\nu(h^{-1}(y)) \right|
+|f(h^{-1}(y))\nu(h^{-1}(y))|\left|\frac{\mu(h^{-1}(y))-\nu(h^{-1}(y))}{\nu(h^{-1}(y))} \right|\\
=&\left(\frac{1}{\mu(h^{-1}(y))} \right)\left|(\mu(h^{-1}(y))-\nu(h^{-1}(y)) \right|+\left|\mu(h^{-1}(y))-\nu(h^{-1}(y))\right|
\neq 0
\end{align*}
\end{small}
this is the same approach taken in the proof of Lemma \ref{Bayes_Dobrushin}. We see that the triangle inequality results in a loose bound that ignores the informative nature of the measurement channel, and thus Theorem \ref{mixed_Dobrushin} relies on the ergodic properties of the transition kernel to achieve exponential filter stability and the measurement kernel to not interfere.
\end{remark}
\begin{remark}
In some cases we are interested in almost sure statements about the path wise convergence of the filter. Similar to \cite[Theorem 2, Part 2]{kleptsyna2008discrete}, filter stability in a pathwise sense follows from exponential stability in expectation via Markov inequality and Borel Cantelli Lemma. Thus if the filter process is exponentially stable with coefficient $\alpha <1$, for any $\rho <\frac{1}{\alpha}$ we also have $
\rho^{k}\|\pi_{n}^{\mu}-\pi_{n}^{\nu}\|_{TV} \to 0~P^{\mu}~a.s.
$
\end{remark}

\section{Controlled Case}\label{controlledCase}
Many applications of filtering involve controlled dynamics, where very few results on filter stability have been reported. In the controlled environment considered, the measurement channel $Q$ is unchanged, however the transition kernel $T(dx'|x,u)$ is different for each applied control action $u$. 

In a controlled process, we must modify the definition of the filter to be conditioned on both past measurements and control actions, that is $\pi_{n}^{\mu}(\cdot)=P^{\mu}(X_{n} \in \cdot|Y_{[0,n]},U_{[0,n-1]})$. With knowledge of the past control actions taken, the filter is still recursive in the following fashion:
\begin{align*}
\pi_{n+1}^{\mu}(dx)&=\phi(\pi_{n}^{\mu},u_{n},y_{n+1})=\frac{g(x,y_{n+1})\int_{\mathcal{X}}T(dx|x',u_{n})\pi_{n}^{\mu}(dx')}{\int_{\mathcal{X}}g(x,y_{n+1})\int_{\mathcal{X}}T(dx|x',u_{n})\pi_{n}^{\mu}(dx')}
\end{align*}
in the update the only difference is $T(dx|x',u_{n})$ now depends on the past control action.

If we define
$
\tilde{\delta}(T)=\inf_{u \in \mathcal{U}} \delta(T(\cdot|\cdot,u))
$
then the result for a controlled model follows immediately from the proof of Theorem \ref{mixed_Dobrushin}.

\begin{theorem}
Assume $\mu \ll \nu$ and that the measurement channel $Q$ is dominated. If we have
$
\alpha=(1-\tilde{\delta}(T))(2-\delta(Q))<1
$
then the filter is exponentially stable with coefficient $\alpha$ for any control policy.
\end{theorem}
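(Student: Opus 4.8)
The plan is to re-run the argument of Theorem \ref{mixed_Dobrushin} essentially verbatim, with two changes: replace the single transition kernel $T$ by the control-dependent family $\{T(\cdot\mid\cdot,u)\}_{u\in\mathcal{U}}$, and replace the contraction factor $1-\delta(T)$ by the uniform bound $1-\tilde{\delta}(T)$. The key structural observation that makes this immediate is that Lemma \ref{Bayes_Dobrushin} concerns only the Bayesian update operator $\psi$ and the measurement channel $Q$, neither of which is altered in the controlled model, so that lemma is available unchanged. Only the transition step of the filter recursion carries the control dependence, and the Dobrushin contraction $\|T(\pi\mid u)-T(\pi'\mid u)\|_{TV}\le (1-\delta(T(\cdot\mid\cdot,u)))\|\pi-\pi'\|_{TV}$ holds for each fixed $u$, hence $\le (1-\tilde{\delta}(T))\|\pi-\pi'\|_{TV}$ uniformly in $u$ — which is precisely why the infimum appears in the definition of $\tilde{\delta}(T)$.

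First I would fix an arbitrary admissible control policy, so that each $U_n$ is a measurable function of the available information $(Y_{[0,n]},U_{[0,n-1]})$, and note that under $P^{\mu}$ both filters $\pi_n^{\mu}$ and $\pi_n^{\nu}$ are driven by this common observation-and-control process, with $N^{\mu}(Y)$ and $N^{\nu}(Y)$ still nonzero $P^{\mu}$-a.s. exactly as in Section \ref{sec:note}. Then, using the controlled recursion $\pi_{n+1}^{\mu}=\psi(T(\pi_n^{\mu}\mid u_n),y_{n+1})$, I would condition on $(Y_{[0,n]},U_{[0,n]})$, insert $X_{n+1}$ as a hidden variable, and use that $P^{\mu}(dy_{n+1}\mid x_{n+1},y_{[0,n]},u_{[0,n]})=Q(dy_{n+1}\mid x_{n+1})$ while $P^{\mu}(dx_{n+1}\mid y_{[0,n]},u_{[0,n]})=T(\pi_n^{\mu}\mid u_n)(dx_{n+1})$. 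The inner integral is then exactly an expected Bayes update with prior $T(\pi_n^{\mu}\mid u_n)$, so Lemma \ref{Bayes_Dobrushin} bounds it by $(2-\delta(Q))\|T(\pi_n^{\mu}\mid u_n)-T(\pi_n^{\nu}\mid u_n)\|_{TV}$.

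Next I would apply the transition contraction at the realized action, $\|T(\pi_n^{\mu}\mid u_n)-T(\pi_n^{\nu}\mid u_n)\|_{TV}\le (1-\tilde{\delta}(T))\|\pi_n^{\mu}-\pi_n^{\nu}\|_{TV}$, which is valid no matter which $u_n$ the policy selected, and take the outer expectation over $P^{\mu}(dy_{[0,n]},du_{[0,n]})$ to obtain the one-step bound $E^{\mu}[\|\pi_{n+1}^{\mu}-\pi_{n+1}^{\nu}\|_{TV}]\le \alpha\,E^{\mu}[\|\pi_n^{\mu}-\pi_n^{\nu}\|_{TV}]$ with $\alpha=(1-\tilde{\delta}(T))(2-\delta(Q))$. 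Iterating, together with one further application of Lemma \ref{Bayes_Dobrushin} at $n=0$ as in the corollary, gives $E^{\mu}[\|\pi_n^{\mu}-\pi_n^{\nu}\|_{TV}]\le (2-\delta(Q))\,\alpha^{n}\,\|\mu-\nu\|_{TV}$, i.e. exponential stability in total variation in expectation with coefficient $\alpha$.

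The one genuinely new point — the main obstacle, if there is one — is the bookkeeping for closed-loop policies: one must check that when $U_n$ depends on the observed history, the conditioning step is still legitimate, in the sense that conditionally on $(Y_{[0,n]},U_{[0,n]})$ the action $u_n$ is a fixed constant and the transition kernel evaluated there has Dobrushin coefficient at least $\tilde{\delta}(T)$. This follows directly from admissibility of the policy and from the uniform infimum in $\tilde{\delta}(T)$, but it is exactly the spot where the proof uses \emph{``for any control policy''} rather than merely ``for a fixed open-loop control sequence''; everything else is a transcription of the proof of Theorem \ref{mixed_Dobrushin}.
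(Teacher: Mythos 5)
Your proposal is correct and follows exactly the route the paper intends: the paper simply asserts that the controlled result ``follows immediately from the proof of Theorem \ref{mixed_Dobrushin},'' and your write-up is precisely that transcription, with the right observations that Lemma \ref{Bayes_Dobrushin} is untouched by the control and that conditioning on $(Y_{[0,n]},U_{[0,n]})$ fixes $u_n$ so the uniform bound $1-\tilde{\delta}(T)$ applies regardless of the (closed-loop) policy.
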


Therefore, in order to guarantee  exponential stability in a control environment we first check the expansion coefficient of the Bayesian update operator $(2-\delta(Q))$. Then, we find the Dobrushin coefficient of $T(\cdot|\cdot,u)$ for every different control action $u$. If under each control action $T(\cdot|\cdot,u)$ has a high enough Dobrushin coefficient, then for every control action the filter update operator is a contraction in total variation in expectation.

It is important to emphasize that it then does not matter what control policy is implemented, since each control action results in a transition kernel with a sufficiently high Dobrushin coefficient, and thus we have uniform exponential stability over all control policies. 

\section{An Application}\label{CaseStudySection}
Consider a system where $\mathcal{X}=\mathcal{Y}=\mathbb{R}$ and the transition and measurement kernels are defined by the functions
\begin{align*}
x_{n+1}=f(x_{n})+N(0,\sigma_{t}^{2})\quad \quad y_{n}= g(x_{n})+N(0,\sigma_{q}^{2})
\end{align*}
that is an additive Gaussian system, but not necessarily a linear one. Assume the functions $f$ and $g$ are measurable and bounded with norms $f(x)\in [-t,t]$ and $g(x) \in [-q,q]$. We then have that
\begin{align*}
&T(dx_{n+1}|x_{n})\sim N(f(x_{n}),\sigma_{t}^{2})&Q(dy_{n}|x_{n})\sim N(f(x_{n}),\sigma_{q}^{2})
\end{align*}
This is not a mixing system in the sense of the conditions required to be able to invoke Hilbert metric based methods (see Definition \ref{mixingDefinition}), hence the previous results in the literature would not apply. Furthermore, $f$ and $g$ are not necessarily well behaved Lipschitz and invertible functions, hence the results of \cite{crisan2008stability} do not apply either. For these kernels we have that
\begin{align*}
&\delta(T)=2P(N(t,\sigma_{t}^{2})<0)&\delta(T)=2P(N(q,\sigma_{q}^{2})<0)
\end{align*}
and this probability is fully determined by the ratio of the mean and standard deviation of the Gaussian in question, $\frac{\sigma_{t}}{t}$, and $\frac{\sigma_{q}}{q}$. The higher the ratio, the higher the Dobrushin coefficient. In Table \ref{tab:values} we see a list of the ratio of the transition kernel and lowest possible ratio of the measurement kernel such that 
$
(1-\delta(T))(2-\delta(Q))<1
$.
If the ratio of $\frac{\sigma_{q}}{q}$ is higher than the stated value, we will get exponential stability for the given transition kernel. If $\frac{\sigma_{t}}{t}> 1.5$ then $\delta(T)> \frac{1}{2}$ and we have exponential stability regardless of $Q$.
\begin{table}[h]
\begin{footnotesize}
\begin{tabular}{|c|c|c|c|c|c|c|c|c|c|c|c|c|c|c|}
\hline
$\frac{\sigma_{t}}{t}$&1.5&1.4&1.3&1.2&1.1&1.0&0.9&0.8&0.7&0.6&0.5&0.4&0.3\\ \hline
$\frac{\sigma_{q}}{q}$&N/A&0.6&0.8&1.01&1.3&1.65&2.13&3.25&5.5&8.0&20.0&70.0
&1000.0\\ \hline
$\delta(T)$&0.50&0.48&0.44&0.40&0.36&0.32&0.27&0.21&0.15&0.10&0.05&0.01 &0.00\\ \hline
$\delta(Q)$&N/A&0.10&0.21&0.32&0.44&0.54&0.64&0.76&0.86&0.90&0.96&0.99 &1.00\\ \hline
\end{tabular}
\caption{Approximate minimum ratio of $\frac{\sigma_{q}}{q}$ in order to achieve a contraction for low values of the transition kernel ratio.}
\label{tab:values}
\end{footnotesize}
\end{table}

\section{Conclusion}\label{sec:conclusion}
In this paper, we propose an alternative approach for exponential stability, where our approach builds on utilizing the Dobrushin's ergodic coefficients associated with both the transition kernel as well as the measurement channel. Such a joint study seems to have been unexplored in the literature, and leads to a concise analysis and simple explicit conditions on filter stability which can be applied to more general system models, including controlled stochastic models. 

\bibliographystyle{amsplain}
\bibliography{thesis}

\providecommand{\bysame}{\leavevmode\hbox to3em{\hrulefill}\thinspace}
\providecommand{\MR}{\relax\ifhmode\unskip\space\fi MR }
\providecommand{\MRhref}[2]{%
  \href{http://www.ams.org/mathscinet-getitem?mr=#1}{#2}
}
\providecommand{\href}[2]{#2}
\begin{thebibliography}{10}

\bibitem{atar1997exponential}
R.~Atar and O.~Zeitouni, \emph{Exponential stability for nonlinear filtering},
  \textbf{33} (1997), no.~6, 697--725.

\bibitem{budhiraja1999exponential}
A.~Budhiraja and D.~Ocone, \emph{Exponential stability in discrete-time
  filtering for non-ergodic signals}, Stochastic processes and their
  applications \textbf{82} (1999), no.~2, 245--257.

\bibitem{cappe2005inference}
O.~Cappe, E.~Moulines, and T.~Ryden, \emph{Inference in hidden markov models},
  Springer, 2005.

\bibitem{chigansky2004stability}
P.~Chigansky and R.~Liptser, \emph{Stability of nonlinear filters in nonmixing
  case}, The Annals of Applied Probability \textbf{14} (2004), no.~4,
  2038--2056.

\bibitem{chigansky2009intrinsic}
P.~Chigansky, R.~Liptser, and R.~van Handel, \emph{Intrinsic methods in filter
  stability}, Handbook of Nonlinear Filtering (2009).

\bibitem{chigansky2010complete}
P.~Chigansky and R.~van Handel, \emph{A complete solution to {B}lackwell's
  unique ergodicity problem for hidden {M}arkov chains}, The Annals of Applied
  Probability \textbf{20} (2010), no.~6, 2318--2345.

\bibitem{crisan2008stability}
D.~Crisan and K.~Heine, \emph{Stability of the discrete time filter in terms of
  the tails of noise distributions}, Journal of the London Mathematical Society
  \textbf{78} (2008), no.~2, 441--458.

\bibitem{dobrushin1956central}
R.L. Dobrushin, \emph{Central limit theorem for nonstationary {M}arkov chains.
  i}, Theory of Probability \& Its Applications \textbf{1} (1956), no.~1,
  65--80.

\bibitem{douc2010forgetting}
R.~Douc, E.~Gassiat, B.~Landelle, and E.~Moulines, \emph{Forgetting of the
  initial distribution for nonergodic hidden {M}arkov chains}, The Annals of
  Applied Probability \textbf{20} (2010), no.~5, 1638--1662.

\bibitem{le2004stability}
F.~Le Gland and N.~Oudjane, \emph{Stability and uniform approximation of
  nonlinear filters using the {H}ilbert metric and application to particle
  filters}, The Annals of Applied Probability \textbf{14} (2004), no.~1,
  144--187.

\bibitem{kleptsyna2008discrete}
ML~Kleptsyna and A~Yu Veretennikov, \emph{On discrete time ergodic filters with
  wrong initial data}, Probability Theory and Related Fields \textbf{141}
  (2008), no.~3-4, 411--444.

\bibitem{di2005ergodicity}
G.B.~Di Masi and L.~Stettner, \emph{Ergodicity of hidden {M}arkov models},
  Mathematics of Control, Signals and Systems \textbf{17} (2005), no.~4,
  269--296.

\bibitem{mcdonald2018stability}
C.~McDonald and S.~Y\"uksel, \emph{Stability of non-linear filters and
  observability of stochastic dynamical systems}, arXiv preprint
  arXiv:1812.01772 (2018).

\bibitem{del2001stability}
P.~Del Moral and A.~Guionnet, \emph{On the stability of interacting processes
  with applications to filtering and genetic algorithms},  \textbf{37} (2001),
  no.~2, 155--194.

\bibitem{ocone1996asymptotic}
D.~Ocone and E.~Pardoux, \emph{Asymptotic stability of the optimal filter with
  respect to its initial condition}, SIAM Journal on Control and Optimization
  \textbf{34} (1996), no.~1, 226--243.

\end{thebibliography}

\end{document}